\newcommand{\nui}[1]{N\paren{#1}}
\newtheorem{fed}{Definition}[section]
\newtheorem*{fed*}{Definition}
\newtheorem*{feds*}{Definitions}
\newtheorem{teo}[fed]{Theorem}
\newtheorem*{teo*}{Theorem}
\newtheorem{lem}[fed]{Lemma}
\theoremstyle{definition}
\newtheorem{rem}[fed]{Remark}
\newtheorem*{rems*}{Remarks}
\def\bdem{\begin{proof}}
	\def\edem{\end{proof}}
\def\coma{\, , \, }
\def\py{\peso{and}}
\newcommand{\peso}[1]{ \quad \text{ #1 } \quad }
\def\suml{\sum\limits}
\def\bce{\begin{center}}
\def\ece{\end{center}}
\def\cO{{\mathcal O}}
\def\py{\peso{and}}
\def\noi{\noindent}
\def\cF{\mathcal F}
\def\cG{\mathcal G}
\def\QED{\hfill $\square$}
\def\EOE{\hfill $\triangle$}
\def\bm{\left[\begin{array}}
\def\em{\end{array}\right]}
\def\ben{\begin{enumerate}}
\def\een{\end{enumerate}}
\def\bit{\begin{itemize}}
\def\eit{\end{itemize}}
\def\barr{\begin{array}}
\def\earr{\end{array}}
\def\eps{\varepsilon}
\def\la{\lambda}
\def\R{\mathbb{R}}
\def\C{\mathbb{C}}
\def\I{\mathbb{I}}
\def\cP{\mathcal{P}}
\def\cS{{\cal S}}
\def\cT{{\cal T}}
\def\cM{{\cal M}}
\def\cN{{\cal N}}
\def\cV{{\cal V}}
\def\cU{{\cal U}}
\def\ua{^\uparrow}
\def\da{^\downarrow}
 \DeclareMathOperator{\Pim}{Im}
 \DeclareMathOperator{\tr}{tr}
\DeclareMathOperator{\leqp}{\leqslant}
\newcommand{\mat}{\mathcal{M}_d(\mathbb{C})}
\newcommand{\matsad}{\mathcal{H}(d)}
\newcommand{\matud}{\mathcal{U}(d)}
\newcommand{\matpos}{\mat^+}
\newcommand{\defpos}{\cP(d)}
\def\beq{\begin{equation}}
\def\eeq{\end{equation}}
\def\pausa{\medskip\noi}
\newcommand\restr[2]{{  \left.\kern-\nulldelimiterspace #1 
  \vphantom{\big|}   \right|_{#2} }}
\def\Ax2{\,( S_{E(\cF)^\#_\cV})\hat{}_x }
\newcommand{\corch}[1]{\left[ #1 \right]}
\newcommand{\paren}[1]{\left(#1\right)}
\newcommand{\llav}[1]{\left\{#1\right\}}
\newcommand{\norm}[1]{\left\|#1\right\|}
\def\nug0{\nu(\cG_0)}
\begin{document}

\title{ {\bf Local extrema for Procustes problems in the set of positive definite matrices}}
\author{ Pablo Calder\'on $^{*}$, Noelia B. Rios $^{*}$ and Mariano A. Ruiz
	\footnote{ e-mail addresses: pcalderon@mate.unlp.edu.ar, nbrios@mate.unlp.edu.ar, mruiz@mate.unlp.edu.ar}
	\\ {\small Depto. de Matem\'atica, FCE-UNLP
		and IAM-CONICET, Argentina  }}
	\date{}
\maketitle

\begin{abstract}
Given two positive definite matrices $A$ and $B$, a well known result by Gelfand, Naimark and Lidskii establishes a relationship between the eigenvalues of $A$ and $B$ and those of $AB$ by means of majorization inequalities. In this work we make a local study focused in the spectrum of the matrices that achieve the equality in those inequalities. As an application, we complete some previous results concerning Procustes problems for unitarily invariant norms in the manifold of positive definite matrices.
\end{abstract}
\noindent Keywords: Matrix approximation, Lidskii inequalities, Procustes problems.

\noindent  AMS subject classification: MSC 42C15, 15A60.

\section{Introduction}

Majorization (and log-majorization) between vectors in $\R^d$ plays a fundamental role in matrix analysis. Although it is not a total order, it is well known that several matrix inequalities are consequences of the comparison of the singular values by means of majorization.
Lidskii's  inequalities  are certainly a very good example of this fact. Indeed, they are essential in some  natural  norm inequalities in matrix theory: those derived from  proximity problems (matrix nearness problems) such as, for example, Procustes problems (\cite{Bhat,GB,Ka}).
In this case, Lidskii inequality provides  an explicit description of global minimizers of functions defined on unitary orbits of matrices. These functions are constructed from the distance to minimize, using  (strictly convex) unitarily invariant norms  (see for example \cite{mrs2}). When the unitary orbit is endowed with a metric space structure, a local study on Lidskii's Theorems also allows a characterization of the spectral structure of local minimizers of such functions. 

Given $A\in\mat$, a complex matrix of size $d$, 
a norm $N(\cdot)$ in $\mat$ 
and a set $\mathcal X\subset \mat$, the typical matrix approximation problem is to search for the minimal distance 
$$ 
d_{N}(A,\mathcal X)=\min \{ d_N(A,C):=N(A-C)\,:\ C\in\mathcal X\} \ ,
$$ 
and for the best approximants
of $A$ in $\mathcal X$: 
$$ 
\mathcal A^{\rm op}_{N}(A\, , \, \mathcal X) 
=\{C\in\mathcal X:\ d_N(A,C)= d_{N}(A \, , \, \mathcal X)\}\ .
$$
That is, solving these problems requires to
provide a characterization and, if possible, an explicit computation (in some cases sharp estimations) 
of $d_{N}(A \coma \mathcal X)$ and  the set of best approximations 
$\mathcal A^{\rm op}_{N}(A\coma \mathcal X)$. 
Typically, a  choice for $N$ is the Frobenius norm (also called $2$-norm) since it is the norm associated with an inner product 
in $\mat$. Other norms that are also of interest are  weighted norms, the $p$-norms for $1\leq p$ (which contains the Frobenius norm), or the 
more general class of unitarily invariant norms. With regards to the sets $\mathcal X$, the most relevant choices are: selfadjoint matrices, 
positive semidefinite matrices, correlation matrices, orthogonal projections, oblique projections and matrices with rank bounded by a fix number 
(see for example \cite{CMR,Eld,High,Ki,Wat}).  In the special case of Procustes type problem, $\mathcal X$ is the unitary orbit of a positive (or selfadjoint) matrix $B$ and $A$ is also a positive (or selfadjoint) matrix.

Once the nearness problem above has been solved for some $A$,  $\mathcal X$ and  $N(\cdot)$, it is natural to pose the following proximity problem: given  a (fixed) $C_0\in\mathcal X$, the problem is to compute (or at least to give an upper bound of) the distance 
$$ 
d_\mathcal X(C_0\coma  \mathcal A^{\rm op}_{N}(A\coma \mathcal X))
=\min\{d_\mathcal X(C_0\coma C):\ C\in \mathcal A^{\rm op}_{N}(A\coma \mathcal X)\}\ ,
$$
where $d_\mathcal X$ denotes a metric in the set $\mathcal X$.

In \cite{dnp1} the following matrix nearness problem is  considered:
let $\nui{\cdot}$ be an arbitrary  unitarily invariant norm in $\mat$. Consider two arbitrary positive semidefinite matrices $A,B \in\matpos$ and let 
$$\cO_B:=\llav{C\in\matpos:\, C=U^*BU \quad\text{for}\,\, U\,\text{ unitary}}\,.$$
Note that $\cO_B$ is a metric space endowed with the usual  metric induced by the operator norm.
Defining the distance 
$d_{\paren{N,\, A,\, B}}=d_N:\cO_B\to \R_{>0}$ given by
$$
d_N(C)= N\paren{A-C}\peso{for} C\in\cO_B\,,
$$ 
thus, the  Procustes problem in $\matpos$ is given by
$$
\min_{C\in \cO_B} d_N\paren{C}
=\min\llav{N\paren{A-C}:\, C\in \cO_B}\,.
$$

With this notation, the nearness problem for $\mathcal X=\cO_B\subset \matpos$,  is solved for an 
arbitrary strictly convex unitarily invariant norm $N(\cdot)$ in $\mat$. That means that  an explicit description
of $d_{N}(A\coma \cO_B)$ and 
$\mathcal A^{\rm op}_{N}(A\coma \cO_B)$ is obtained. In that work,  local minimizers of $d_N$ in $\cO_B$ are completely characterized in terms of their spectra as an application of local Lidskii's (additive) Theorems. Even more, local minimizers are in effect global and it does not depend on  the (strictly convex) unitary invariant norm chosen.

In this paper, based on \cite{BhatCon} and using similar techniques as in \cite{dnp1, dnp2, dnp3}, we focus on the following Procustes problem: let $N(\cdot)$ a unitary invariant norm and $\defpos$ the set of the strictly positive matrices of dimension $d$. 
Then, given  $A,B\in\defpos$,  define the distance 
$F_{\paren{N,\, A,\, B}}=F_N:\cO_B\to \R$ given by
\beq
F_N(C)= N\paren{\log\paren{A^{-1/2}CA^{-1/2}}}\peso{for} C\in\cO_B\,.
\eeq
Then, the goal is to compute
\beq
\min_{C\in \cO_B} F_N\paren{C}=\min_{C\in \cO_B}N\paren{\log\paren{A^{-1/2}CA^{-1/2}}}\,.
\eeq
and to characterize the minimizers when $N(\cdot)$ is strictly convex.

Our purpose in this note is to make a local analysis of the Gelfand-Naimark-Lidskii result, using a similar approach  to that applied to the study of the (additive) Lidskii inequalities in \cite{dnp1}. Then we use this  in order to extend the results in \cite{BhatCon} to a characterization of the local extrema for the Procustes problem described above.

\section{Preliminaries}

In this section we introduce the notations, terminology and results 
that we will use throughout the 
paper.

\pausa
{\bf Notation and terminology}. We let $\mathcal M_{k,d}(\C)$ be the space of complex $k\times d$ matrices and write $\mathcal M_{d,d}(\C)=\mat$ for the algebra of complex $d\times d$  matrices. We denote by $\matsad\subset \mat$ the real subspace of selfadjoint matrices and by $\defpos$ the set of positive definite matrices in $\mat$. We let $\matud\subset \mat$ denote the group of unitary matrices.

Given $x=(x_i)_{i\in\I_d}\in\R^d$ we denote by $x\da=(x_i\da)_{i\in\I_d}$ the vector obtained by rearranging the entries of $x$ in non-increasing order (analogously $x\ua$ denote in non-decreasing).  

Given a matrix $A\in\matsad$ we denote by 
$\la(A)=\la(A)\da=(\la_i(A))_{i\in\I_d}\in (\R^d)\da$ the eigenvalues of $A$ counting multiplicities and arranged in 
non-increasing order.  
Given a vector $x\in\C^d$ we denote by $D_x$ the diagonal matrix in $\mat$ whose main diagonal is given by $x$.

We denote by $(\R^d)\da=\{x\da:\ x\in\R^d\}$ and $(\R_{\geq 0}^d)\da=\left\{x\da:\ x\in\R_{\geq 0}^d\right\}$.
If $x,\,y\in\C^d$ we denote by $x\otimes y\in\mat$ the rank-one matrix given by $(x\otimes y) \, z= \langle z\coma y\rangle \ x$, for $z\in\C^d$.

\pausa

Next we recall the notion of majorization between vectors, which is one of the main characters throughout our work.
\begin{fed}\rm 
	Let $x,y\in\R^d$. We say that $x$ is
	{\it submajorized} by $y$, and write $x\prec_w y$,  if
	$$
	\suml_{i=1}^j x^\downarrow _i\leq \suml_{i=1}^j y^\downarrow _i \peso{for every} 1\leq j\leq d\,.
	$$
	If $x\prec_w y$ and $\tr x = \sum_{i=1}^d x_i=\sum_{i=1}^d y_i = \tr y$,  then $x$ is
	{\it majorized} by $y$, and write $x\prec y$.
\end{fed}

\begin{rem}\label{desimayo}
	\pausa Given $x,\,y\in\R^d$ we write
	$x \leqp y$ if $x_i \le y_i$ for every $i\in \mathbb I_d \,$.  It is a standard  exercise 
	to show that: 
	\begin{enumerate}
		\item $x\leqp y \implies x^\downarrow\leqp y^\downarrow  \implies x\prec_w y $. 
		\item $x\prec y\implies |x|\prec_w|y|$, where $|x|=(|x_i|)_{i\in\I_d}\in\R_{\geq 0}^d$.
		\item $x\prec y,\, |x|\da=|y|\da \implies x\da=y\da$.
		\EOE
	\end{enumerate}
\end{rem}

\pausa

Recall that a norm $\nui{\cdot}$ in $\mat$ is unitarily invariant if 
$$ \nui{UAV}=\nui{A} \peso{for every} A\in\mat \py U,\,V\in\matud\,,$$
and $\nui{\cdot}$ is {\bf strictly convex} if its restriction to diagonal matrices is a strictly convex  norm in $\C^d$. 
Examples of u.i.n. are the spectral norm $\|\cdot\|$ and the $p$-norms $\|\cdot\|_p$, for $p\geq 1$
(strictly convex if $p>1$).
It is well known that (sub)majorization relations between singular values of matrices are intimately related 
with inequalities with respect to u.i.n's. 
The following result summarizes these relations (see for example \cite{Bhat}):

\begin{teo}\label{teo intro prelims mayo}\rm
	Let $A,\,B\in\mat$ be such that $s(A)\prec_w s(B)$ where
	$s(C)=\la(|C|)$ denotes the singular values of $C$, i.e. the eigenvalues of $|C|=(C^*C)^{1/2}\in\matpos$. Then:
	\ben 
	\item For every u.i.n. $\nui{\cdot}$ in $\mat$
	we have that $N(A)\leq N(B)$.
	\item If $\nui{\cdot}$ is a strictly convex u.i.n. in $\mat$ 
	and $N(A)=N(B)$, then  $s(A)=s(B)$.\qed
	\een
	
\end{teo}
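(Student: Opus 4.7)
The plan is to reduce both statements to the classical theory of symmetric gauge functions. Recall that every u.i.n.\ $\nui{\cdot}$ on $\mat$ arises from a unique symmetric gauge function $\Phi\colon\R^d\to\R_{\geq 0}$ (a permutation-invariant and absolute norm, i.e.\ $\Phi(x)=\Phi(|x|)$ and $\Phi(Px)=\Phi(x)$ for every permutation $P$) via $\nui{C}=\Phi(s(C))$; moreover $\nui{\cdot}$ is strictly convex as a u.i.n.\ iff $\Phi$ is strictly convex on $\R^d$. Writing $x=s(A)$ and $y=s(B)$ (both in $(\R^d_{\geq 0})\da$), the hypothesis reads $x\prec_w y$, and the two conclusions become $\Phi(x)\leq\Phi(y)$ and, under strict convexity with $\Phi(x)=\Phi(y)$, $x=y$.

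For part (1) the strategy is two-step. First, when $x\prec y$ (full majorization), Rado's theorem expresses $x$ as a convex combination $x=\sum_i\la_i P_i y$ of permutations of $y$; convexity and permutation invariance of $\Phi$ immediately give $\Phi(x)\leq\sum_i\la_i\Phi(P_iy)=\Phi(y)$. Second, a standard construction produces an intermediate $u\in\R^d_{\geq 0}$ with $x\leq u$ coordinatewise and $u\prec y$, reducing weak to strong majorization provided $\Phi$ is monotone on the non-negative orthant. This monotonicity follows by decomposing any $a$ with $0\leq a\leq b$ as a convex combination of the corners of the box $\prod_i[0,b_i]$ and noting that each corner $v$ satisfies $|v|\leq b$ coordinatewise, so iterating $\Phi(\ldots,0,\ldots)\leq\tfrac{1}{2}\Phi(\ldots,b_j,\ldots)+\tfrac{1}{2}\Phi(\ldots,-b_j,\ldots)=\Phi(b)$ (convexity together with the absolute property) yields $\Phi(v)\leq\Phi(b)$. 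Chaining the two steps gives $\Phi(x)\leq\Phi(u)\leq\Phi(y)$, i.e.\ $\nui{A}\leq\nui{B}$.

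For part (2), under strict convexity and $\Phi(x)=\Phi(y)$, both inequalities in the chain above collapse to equalities. The equality $\Phi(u)=\Phi(y)$, combined with the expression $u=\sum_i\la_i P_i y$ (each $P_i y$ satisfying $\Phi(P_i y)=\Phi(y)$) and strict convexity of $\Phi$, forces all the permutations $P_i y$ with $\la_i>0$ to coincide; hence $u$ itself is a permutation of $y$ and $u\da=y\da$. From $0\leq x\leq u$ coordinatewise and $\Phi(x)=\Phi(u)$, strict convexity then upgrades the monotonicity step of part (1) to strict monotonicity, forcing $x=u$, so $s(A)=x\da=u\da=y\da=s(B)$. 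The main technical obstacle is this last strict-monotonicity step: one must verify that whenever $x\neq u$ the convex decomposition of $x$ inside the box $\prod_i[0,u_i]$ genuinely involves at least two distinct corners with positive weight (or alternatively reduce to a single-coordinate flip), so that strict convexity of $\Phi$ really delivers the strict inequality $\Phi(x)<\Phi(u)$ contradicting the assumed equality.
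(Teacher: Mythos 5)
The paper does not actually prove this theorem; it states it as known and cites Bhatia's \emph{Matrix Analysis}. Your argument is the standard textbook route via von Neumann's correspondence between unitarily invariant norms and symmetric gauge functions, together with Rado's convex-combination theorem, the intermediate vector reducing weak to strong majorization, and monotonicity of symmetric gauge functions on the nonnegative orthant, so there is no methodological divergence from the source the paper points to.

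There is, however, a real gap exactly at the point you flag. For the strict-monotonicity step you propose to decompose $x$ as a convex combination of corners of the box $\prod_i[0,u_i]$ and then invoke strict convexity. But strict convexity of a norm delivers a strict inequality only when combining \emph{distinct points of equal norm}; the corners of that box will in general have different $\Phi$-values, so this decomposition does not produce the contradiction you need. The clean fix uses only what part (1) already gives you. Suppose $0 \leq x \leq u$ coordinatewise, $x \neq u$, and $\Phi(x)=\Phi(u)$; note this common value is positive, since otherwise $x=0$ and then $u\neq 0$ would force $\Phi(u)>0$. Let $m=\tfrac12(x+u)$. Since $x\leq m\leq u$, monotonicity gives $\Phi(x)\leq\Phi(m)\leq\Phi(u)=\Phi(x)$, so $\Phi(m)=\Phi(x)=\Phi(u)$. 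Thus $m$ is the midpoint of two distinct points $x\neq u$ on the sphere of radius $\Phi(x)>0$, and strict convexity gives $\Phi(m)<\Phi(x)$, a contradiction. Hence $x=u$, and combined with the Rado/strict-convexity argument showing $u$ is a permutation of $y$, you obtain $s(A)=x^\downarrow=u^\downarrow=y^\downarrow=s(B)$. With this replacement your proof is complete and correct.
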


Given a differential manifold $\cN$ and $p\in\cN$, we denote the tangent space of $\cN$ in $p$ by $\cT_p \,\cN$.
If $\cN, \cM$ are differential manifolds and $F:\cN\to\cM$ is a differential map; recall that  $F$ is a  submersion at $p\in\cN$ if the differential 
$D_{p}F:\cT_{p}\,\cN\to \cT_{F(p)}\,\cM,$ 
is surjective. If $F$ is a submersion at every $p\in\cN$ then $F$ is a \textit{submersion}.

As a consequence,
\begin{teo}\label{es abierta}\rm
	Let $\cN,\cM$ differential manifolds and $F:\cN\to\cM$ a submersion at $p_0\in\cN$. Then,
	$F$ is locally open around $p_0$; i.e, $\forall \eps>0$,  given 
	$
	\cN_\eps:=\llav{p\in\cM:\, d(p,p_0)<\eps},
	$
	the set $F(\cN_\eps)$
	contains an open neighborhood of $F(p_0)$ in $\cM$.
	\QED
\end{teo}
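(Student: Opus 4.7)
My plan is to reduce to the local normal form of a submersion. Since $D_{p_0}F:\cT_{p_0}\cN\to\cT_{F(p_0)}\cM$ is surjective, I would first invoke the standard submersion theorem (a consequence of the inverse function theorem): there exist open neighborhoods $U\subset\cN$ of $p_0$ and $V\subset\cM$ of $F(p_0)$, together with smooth charts $\varphi:U\to\varphi(U)\subset\R^n$ and $\psi:V\to\psi(V)\subset\R^m$ (where $n=\dim\cN$, $m=\dim\cM$, with $\varphi(p_0)=0$, $\psi(F(p_0))=0$), such that the coordinate representation of $F$ takes the canonical projection form
\[
\psi\circ F\circ\varphi^{-1}(x_1,\dots,x_m,x_{m+1},\dots,x_n)=(x_1,\dots,x_m).
\]

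Once this normal form is in place, the argument is essentially topological. Given $\eps>0$, the set $\cN_\eps$ is an open neighborhood of $p_0$ in $\cN$. Shrinking $U$ if necessary, I may assume $U\subset\cN_\eps$, so $\varphi(U)$ is an open neighborhood of $0$ in $\R^n$. Pick a product neighborhood $W_1\times W_2\subset\varphi(U)$ with $W_1\subset\R^m$ and $W_2\subset\R^{n-m}$ open neighborhoods of the respective origins. Under the coordinate expression above, the image of $W_1\times W_2$ under the projection is precisely $W_1$, which is open in $\R^m$; transporting back through $\psi^{-1}$ yields an open neighborhood of $F(p_0)$ in $\cM$ contained in $F(U)\subset F(\cN_\eps)$. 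This gives the desired conclusion.

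The main conceptual step is the local normal form for submersions, but I would just cite this as a standard fact from differential geometry (proven via the inverse function theorem by extending $D_{p_0}F$ to a linear isomorphism and composing with its inverse). No substantial obstacle arises: the openness assertion is essentially the statement that coordinate projections $\R^n\to\R^m$ are open maps, pulled back through local diffeomorphisms. Since the theorem as stated is only used later to guarantee that images of small metric balls contain open sets, this level of generality suffices.
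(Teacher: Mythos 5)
Your argument is correct and is precisely the standard textbook proof that the paper implicitly relies on: the paper states Theorem~\ref{es abierta} with a \QED and no proof, presenting it as a known consequence of the submersion (local normal form) theorem. Your reduction to the canonical projection form via the inverse function theorem, followed by the observation that coordinate projections are open maps, is exactly the argument one would cite; the only minor point worth flagging is that the paper's displayed definition of $\cN_\eps$ has a typo ($p\in\cM$ should read $p\in\cN$), which you correctly interpret as a metric ball around $p_0$ in the domain.
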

\pausa

In this paper we consider  $\matud\in\mat$ endowed with its natural (Lie) structure of differential manifold.
Thus, we consider the product manifold
$\matud\times\matud$ endowed with the following metric
$$d((U_1,V_1),(U_2,V_2))=\max\{\|U_1-U_2\|,\,\|V_1-V_2\| \}\,
\peso{for} (U_1,V_1),(U_2,V_2) \in \matud\times\matud\,$$ and $\|\cdot\|$ the usual spectral norm.
It is known that the exponential map
$\matsad\ni i\cdot X\mapsto \exp(X)$ identifies the tangent space
$\cT_I \,\matud$ ($I$ the identity)
with the set of anti-hermitian matrices  $i\cdot\matsad$,
since the curve $\gamma(t)=\exp(tX)\in\matud$ is such that $\gamma'(0)=X\in i\cdot\matsad$.

Given $A,B\in\matsad$, let $\cO_A=\llav{V^*AV:\, V\in\matud }$ be the unitary orbit of $A$, and  consider the smooth function $\mathcal C_A:\matud\to \cO_A$, defined by $\mathcal C_A(U)=U^*AU$.

Then
$$
D_I \mathcal C_A (X)=\corch{X,A}\in\matsad\,\peso{for} X\in i\cdot\matsad\,,
$$
where $\corch{B,A}:=BA-AB$.

Given a set of matrices $\mathcal S=\cS^*=\llav{A\in \mat:\, A^*\in\cS}\subset\mat$, the commutant of $\mathcal S$ is the
unital $*$-subalgebra of $\mat$ given by 
$$ \mathcal S'=\{\ C\in\mat:\ [C,D]=0\ \text{for all}\ D\in\mathcal S\ \}\subset \mat\,.$$
\section{A geometrical study of Gelfand - Naimark - Lidskii inequality}
Given $A,B$ in $\defpos$, the well known  Gelfand-Naimark-Lidskii result relates the eigenvalues of  $A, B$  and $AB$ by means of log-majorization inequalities. 

\begin{teo}[Gelfand - Naimark - Lidskii]
	Let $A,B$ be in $\defpos$.  Then
	\[ \log (\la^\downarrow (B))+\log(\la^\uparrow (A))\prec \log (\la(BA))
	\prec \log (\la^\downarrow (B))+\log(\la^\downarrow (A))
	.\]
\end{teo}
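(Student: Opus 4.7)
The plan is to prove both log-majorizations through the $k$-th antisymmetric tensor power $\Lambda^k$, a multiplicative map between matrix algebras satisfying $\|\Lambda^k P\|=\prod_{i=1}^k\la_i\da(P)$ for every positive matrix $P$. The starting observation is that, although $BA$ is not self-adjoint in general, it is similar to the positive-definite matrix $A^{1/2}BA^{1/2}$, so $\la(BA)=\la(A^{1/2}BA^{1/2})\in\R^d_{>0}$ and the logarithms are well defined.

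For the right-hand majorization, I would apply $\Lambda^k$ to the factorization $A^{1/2}BA^{1/2}=A^{1/2}\cdot B\cdot A^{1/2}$ and invoke submultiplicativity of the operator norm:
\[
\prod_{i=1}^k\la_i\da(BA)=\|\Lambda^k(A^{1/2}BA^{1/2})\|\leq\|\Lambda^kA^{1/2}\|^2\,\|\Lambda^k B\|=\prod_{i=1}^k\la_i\da(A)\la_i\da(B),
\]
with equality at $k=d$ by $\det(BA)=\det(A)\det(B)$. Because $\log\la\da(B)+\log\la\da(A)$ is already sorted in decreasing order, this yields the desired majorization $\log\la(BA)\prec\log\la\da(B)+\log\la\da(A)$.

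For the left-hand inequality I would apply the same submultiplicative argument to the dual factorization $B=A^{-1/2}(A^{1/2}BA^{1/2})A^{-1/2}$, obtaining
\[
\|\Lambda^k B\|\leq\|\Lambda^k A^{-1/2}\|^2\,\|\Lambda^k(A^{1/2}BA^{1/2})\|=\frac{\prod_{i=1}^k\la_i\da(BA)}{\prod_{i=1}^k\la_i\ua(A)},
\]
which rearranges to $\prod_{i=1}^k\la_i\da(B)\la_i\ua(A)\leq\prod_{i=1}^k\la_i\da(BA)$, with equality at $k=d$ (again by determinants).

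The subtle step I anticipate is passing from these $k$-indexed partial-product bounds to the full vectorial majorization $\prec$ defined in the preliminaries: unlike the upper bound, the vector $\log\la\da(B)+\log\la\ua(A)$ is not sorted in decreasing order in general, so its partial sums in the given order need not match the top-$k$ sums of its sorted version. To close this gap I would strengthen the $\Lambda^k$-argument to arbitrary subsets $S\subset\Id$ of size $k$ (a Lidskii--Wielandt style refinement), establishing $\prod_{i\in S}\la_i\da(B)\la_i\ua(A)\leq\prod_{i=1}^k\la_i\da(BA)$ for every such $S$. Taking the supremum over $S$ recovers the top-$k$ sum of the sorted vector $(\log\la\da(B)+\log\la\ua(A))\da$, and combined with equality at $k=d$ this yields the full majorization.
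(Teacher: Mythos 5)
The paper itself does not prove this theorem --- it is stated as a classical result (Gelfand--Naimark and Lidskii), so there is no internal proof to compare against. Assessing your argument on its own merits: the right-hand log-majorization is handled correctly and completely. Applying $\Lambda^k$ to $A^{1/2}\cdot B\cdot A^{1/2}$, invoking submultiplicativity of the operator norm, and observing that the target vector $\log\la\da(B)+\log\la\da(A)$ is already sorted decreasing (so that top-$k$ partial-sum bounds with equality at $k=d$ do give $\prec$) is exactly the standard argument, and it is sound.

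The left-hand inequality, however, has a genuine gap that your last paragraph flags but does not close. Your $\Lambda^k$ computation establishes only
$\prod_{i=1}^k \la_i\da(B)\,\la_i\ua(A)\leq\prod_{i=1}^k\la_i\da(BA)$,
i.e.\ the inequality for the single index set $S=\{1,\dots,k\}$. Since $\log\la\da(B)+\log\la\ua(A)$ is the sum of a decreasing and an increasing vector, its decreasing rearrangement can select non-initial indices, so establishing $\prec$ really does require $\prod_{i\in S}\la_i\da(B)\,\la_i\ua(A)\leq\prod_{i=1}^k\la_i\da(BA)$ for \emph{every} $S$ of cardinality $k$ --- and you correctly identify this as the missing piece. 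The problem is that ``strengthening the $\Lambda^k$-argument'' does not do it: by construction, $\|\Lambda^kX\|$ for positive $X$ sees only the product of the $k$ largest eigenvalues of $X$, so the submultiplicativity bound is structurally incapable of producing partial products over arbitrary index sets. Nor can one bootstrap from the right-hand inequality; the implication ``$x\prec y-z$ with $z$ fixed $\Rightarrow x+z\prec y$'' is false in general, so applying the already-proved Weyl-type bound to $BA$ and $A^{-1}$ does not give what is needed. Closing the gap requires a genuinely different ingredient --- e.g.\ a minimax/variational characterization of eigenvalues applied on a suitably chosen $k$-dimensional subspace, the CS decomposition, or the full Gelfand--Naimark inequality for singular values as in Bhatia's Theorem III.4.5 --- and this extra ingredient is precisely the nontrivial content of the Lidskii side of the theorem. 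As written, the left-hand majorization remains unproved.
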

Notice that, in particular, 
\begin{equation}\label{Lidskii multiplicativo} \log (\la^\downarrow (B))-\log(\la^\downarrow (A))\prec \log (\la(BA^{-1}))
\prec \log (\la^\downarrow (B))-\ln(\la^\uparrow (A)).
\end{equation}
The purpose of this section is to do a {\bf local} study of the equality case: 
$$\log (\la^\downarrow (B))-\log(\la^\downarrow (A))= \log (\la(BA^{-1})).$$ We shall restrict our attention to this inequality, needless to say that a similar study can be done for the equality in  the right majorization inequality
in \eqref{Lidskii multiplicativo}.

The next result of \cite{MRS} characterizes the equality. For convenience, we restate it in our log notation for GNL inequality. Recall that in our notation we consider the eigenvalues arranged in a non-increasing order:  $\la(A)=\la^\downarrow (A)$ and $\la(B)=\la^\downarrow (B)$.
\begin{teo} [\cite{MRS}, Theorem 5.1]\label{Teo MRS}
	Let $A$, $B$ $\in\defpos$. Then,
	\[\left( \log (\la(B)) - \log(\la(A))\right)^\downarrow =\log (\la (BA^{-1}))\] 
	if and only if there exists an unitary $U\in \matud$ such that 
	\[UAU^*=D_{\la(A)}\text{ and } UBU^*=D_{\la(B)}.\] 
\end{teo}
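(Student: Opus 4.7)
The plan is to address the two implications separately, the nontrivial one reducing to the equality case of a rearrangement-type trace inequality combined with a block-structure analysis on the ``allowed-edge'' graph.

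The \emph{if} direction is immediate by direct computation: if $UAU^* = D_{\lambda(A)}$ and $UBU^* = D_{\lambda(B)}$, then conjugating by $U$ gives $UBA^{-1}U^* = D_{\lambda(B)}D_{\lambda(A)}^{-1}$, the diagonal matrix with componentwise ratios $\lambda_i(B)/\lambda_i(A)$. Hence the spectrum of $BA^{-1}$ is the multiset $\{\lambda_i(B)/\lambda_i(A)\}_{i=1}^d$, and sorting the logarithms in non-increasing order yields both sides of the claimed identity.

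For the \emph{only if} direction, observe first that since $\log\lambda(BA^{-1})$ is by definition non-increasing, the hypothesis is equivalent to the multiset equality $\text{spec}(BA^{-1}) = \{\lambda_i(B)/\lambda_i(A)\}_{i=1}^d$, which in particular forces the trace identity $\tr(BA^{-1}) = \sum_i \lambda_i(B)/\lambda_i(A)$. Fix spectral decompositions $A = UD_{\lambda(A)}U^*$ and $B = VD_{\lambda(B)}V^*$, set $W = V^*U \in \matud$ and $P_{ij} := |W_{ij}|^2$ (a doubly stochastic matrix). A direct computation gives $\tr(BA^{-1}) = \sum_{i,j} \lambda_i(B)\lambda_j(A)^{-1} P_{ij}$, and since $\lambda(B)$ is non-increasing in $i$ while $\lambda(A)^{-1}$ is non-decreasing in $j$, the rearrangement inequality applied to doubly stochastic matrices (via Birkhoff's theorem) yields
$$\sum_{i,j} \lambda_i(B) \lambda_j(A)^{-1} P_{ij} \;\geq\; \sum_{i=1}^d \lambda_i(B)/\lambda_i(A),$$
with equality forcing the support of $P$ to lie in the set of pairs $(i,j)$ such that $\lambda_i(A)=\lambda_j(A)$ or $\lambda_i(B)=\lambda_j(B)$.

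In the generic case where all $\lambda_i(A)$ and all $\lambda_i(B)$ are distinct, this support condition immediately forces $P=I$; then $W$ is a diagonal unitary, $U$ and $V$ share columns up to unimodular factors, and $U$ is itself the desired simultaneous diagonalizer. The main technical obstacle lies in the degenerate case: when $A$ or $B$ has repeated eigenvalues, $W$ can have a nontrivial block structure supported on the ``allowed-edge'' graph. Here the strategy is to exploit the freedom to rechoose orthonormal bases \emph{within} each eigenspace of $A$ (which leaves $UAU^*=D_{\lambda(A)}$ unchanged) so as to realign $U$ with $V$'s action inside each block. Ensuring that this rechoice produces $UBU^*=D_{\lambda(B)}$ with the $B$-eigenvalues appearing in exactly the prescribed non-increasing order on the diagonal requires the \emph{full} multiset equality---not just the trace identity---since it is this finer information that pins down how the $B$-spectrum distributes across the $A$-eigenspaces. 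Carrying out this block-structure analysis is the crux of the proof.
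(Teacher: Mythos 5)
The paper does not actually prove Theorem~\ref{Teo MRS}: it is quoted verbatim from \cite{MRS} (Theorem 5.1), so there is no proof inside this paper to compare your argument against. I will therefore evaluate your proposal on its own.

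Your \emph{if} direction is correct. In the \emph{only if} direction, however, there is a genuine gap at the step you rely on most heavily. You claim that equality in the rearrangement bound forces the support of $P_{ij}=|W_{ij}|^2$ to lie in the set of pairs $(i,j)$ with $\la_i(A)=\la_j(A)$ or $\la_i(B)=\la_j(B)$. That characterization of the equality case is false. Take $\la(A)=(1,1,1/2)$, $\la(B)=(2,1,1)$, and let $W$ be the permutation matrix of the $3$-cycle $1\mapsto 2\mapsto 3\mapsto 1$, so that (in your notation) $P_{12}=P_{23}=P_{31}=1$. With $a_i=\la_i(B)=(2,1,1)$ and $b_j=\la_j(A)^{-1}=(1,1,2)$ one gets
\[
\sum_{i,j} a_i b_j P_{ij}=a_1b_2+a_2b_3+a_3b_1=2+2+1=5=\sum_i a_ib_i ,
\]
so the trace equality holds --- in fact even the full multiset equality holds, since $\lambda(BA^{-1})=(2,2,1)$ coincides with $(\log\la(B)-\log\la(A))^{\downarrow}$ exponentiated --- yet $P_{31}=1$ while $\la_1(A)\neq\la_3(A)$ and $\la_1(B)\neq\la_3(B)$. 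So the permutations appearing in a Birkhoff decomposition of an extremal $P$ need not respect your ``allowed-edge'' graph: nontrivial cycles can achieve equality even when each individual hop is not a tie in $\la(A)$ or in $\la(B)$.

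This matters because the allowed-edge structure is precisely what you use to organize the degenerate case. In the example above the conclusion of the theorem still holds (one just swaps $e_1$ and $e_2$ inside the $2$-dimensional $A$-eigenspace), but it does not follow from the block picture you describe, since the support of $P$ escapes your blocks. On top of that, you explicitly defer the block-structure analysis as ``the crux of the proof'' without carrying it out; given that the structural claim it was to rest on is incorrect, the degenerate case --- which is the only nontrivial case --- remains unproven. A correct argument would either need the right equality characterization for the Monge/rearrangement problem (which is strictly weaker than edge-wise ties), or, more robustly, should exploit the full multiset equality $\la(BA^{-1})=\big(\la(B)/\la(A)\big)^{\downarrow}$ from the start (e.g. via an inductive reduction along eigenspaces of $A^{-1/2}BA^{-1/2}$) rather than passing through the trace identity and its equality case.
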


Following the techniques developed in \cite{dnp2}, we consider the manifold
$$\defpos ^\tau:=\{C\in \defpos \,:\, \det(C)=\tau\}$$ with $\tau= 
\frac{\det(B)}{\det(A)}$. Then we define a map
$\Gamma : \matud \times \matud \longrightarrow \defpos^\tau $ given by
\begin{equation}\label{el gama multi}
\Gamma(U,V)=UA^{-1/2}U^*VBV^*UA^{-1/2}U^*.
\end{equation}
\begin{lem} \label{gama es una submersion}
	$\Gamma$ is a submersion at $(I,I)\in \matud\times \matud$ if and only if 
	\[ \{A,B\}':=\{C\in \mat \, :\, AC=CA, \, BC=CB\}=\C\cdot I.\]
\end{lem}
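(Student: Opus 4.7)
The strategy is to compute $D_{(I,I)}\Gamma$ explicitly, recognize that its image automatically lies in $T_T\defpos^\tau$ (where $T := A^{-1/2}BA^{-1/2}$), and then characterize surjectivity onto this target through an orthogonal-complement calculation under the Frobenius inner product on $\matsad$. Writing $P := A^{-1/2}$ and parametrizing $(U(t),V(t)) = (e^{tX},e^{tY})$ with $(X,Y)\in i\matsad\times i\matsad$, the product rule applied to the factorization $\Gamma(U,V)=(UPU^*)(VBV^*)(UPU^*)$ gives
\[
D_{(I,I)}\Gamma(X,Y) \;=\; [X,P]\,BP \;+\; P\,[Y,B]\,P \;+\; PB\,[X,P].
\]
Since $\det\Gamma\equiv\tau$, the image lies in $V := T_T\defpos^\tau = \{H\in\matsad : \tr(T^{-1}H)=0\}$.

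Next, equip $\matsad$ with the real inner product $\langle H_1,H_2\rangle = \tr(H_1H_2)$. Then $V^\perp = \R\cdot T^{-1}$ is one-dimensional, and $\Gamma$ is a submersion at $(I,I)$ if and only if the orthogonal complement of the image of $D_{(I,I)}\Gamma$ in $\matsad$ also has real dimension one. Thus the task reduces to computing the space
\[
\cZ := \bigl\{H\in\matsad : \tr\bigl(D_{(I,I)}\Gamma(X,Y)\cdot H\bigr) = 0\ \text{for all}\ (X,Y)\in i\matsad\times i\matsad\bigr\}
\]
and showing that $\dim_\R\cZ = 1$ precisely when $\{A,B\}' = \C\cdot I$.

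The condition defining $\cZ$ splits via cyclicity of the trace into $\tr(X M_1) = 0$ and $\tr(Y M_2) = 0$ for all $X,Y\in i\matsad$, where
\[
M_1 = PBPH - BPHP + PHPB - HPBP,\qquad M_2 = BPHP - PHPB.
\]
A direct check gives $M_j^* = -M_j$ for $j=1,2$ whenever $H\in\matsad$, so each trace pairing vanishes precisely when $M_1 = M_2 = 0$. Using $M_2 = 0$ to cancel terms in $M_1$, the system collapses to $[B,A^{-1/2}HA^{-1/2}] = 0$ together with $[T,H] = 0$. I then substitute $K := A^{-1/2}HA^{-1/2}$ (a real-linear bijection on $\matsad$) and $L := KB$: from $[K,B]=0$ one gets $L = KB = BK \in \matsad$, and conjugating $[T,H]=0$ by $A^{\pm 1/2}$ rewrites it as $[A,L]=0$. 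Conversely, any $L\in\{A,B\}'\cap\matsad$ is produced from $K = LB^{-1}\in\matsad$. Hence $L\mapsto H = A^{1/2}LB^{-1}A^{1/2}$ is a real-linear bijection $\{A,B\}'\cap\matsad \to \cZ$.

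Since $\{A,B\}'$ is a unital $*$-subalgebra of $\mat$, one has $\dim_\R(\{A,B\}'\cap\matsad) = \dim_\C\{A,B\}'$. Therefore $\dim_\R\cZ = 1$ if and only if $\{A,B\}' = \C\cdot I$, which completes the equivalence. The delicate step is the cyclic-trace rearrangement that produces $M_1,M_2$ together with the check that both are anti-Hermitian; this is what turns the orthogonality conditions into honest matrix equations. The subsequent substitution $L = KB$ is the key algebraic observation, since it bundles the two seemingly unrelated commutator conditions into a single membership statement in the commutant $\{A,B\}'$.
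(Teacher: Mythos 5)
Your proof is correct, and it takes a genuinely different route from the paper's. The core computation — expanding $\tr\bigl(D_{(I,I)}\Gamma(X,Y)\,H\bigr)$ by cyclicity to isolate the anti-Hermitian matrices $M_1,M_2$, then noting $M_1=[T,H]-M_2$ so that orthogonality to the image collapses to $[B,\,A^{-1/2}HA^{-1/2}]=0$ and $[T,H]=0$ — produces exactly the same commutator relations the paper extracts (its \eqref{conmuta 1} and \eqref{conmuta 2}). Where you diverge is in what you do with them. The paper argues the two implications separately: for the forward direction it supposes $D_{(I,I)}\Gamma$ is not onto and shows that $B^{1/2}A^{-1/2}Y_0A^{-1/2}B^{1/2}$ would be a non-scalar element of $\{A,B\}'$, a contradiction when the commutant is trivial; for the converse it hand-builds a nonzero normal vector $Y_0$ from a nontrivial projection in $\{A,B\}'$. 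You instead fold both commutator conditions into a single real-linear bijection $\{A,B\}'\cap\matsad\ni L\mapsto A^{1/2}LB^{-1}A^{1/2}\in\cZ$ (your element $L=KB$ agrees with the paper's $B^{1/2}KB^{1/2}$ once $[K,B]=0$ is used), yielding the dimension identity $\dim_\R\cZ=\dim_\C\{A,B\}'$. Since the image of $D_{(I,I)}\Gamma$ always sits inside the codimension-one tangent space $V$ and $\cZ\supset\R\cdot T^{-1}$, this single dimension count settles both directions simultaneously. Your route is cleaner — no case split, and it quantifies the failure of the submersion property by the size of the commutant — while the paper's converse has the virtue of exhibiting a concrete normal vector rather than merely counting dimensions.
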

\begin{proof}
	Suppose that the relative commutant $\{A,B\}'$ is trivial. In order to prove that $\Gamma$ is a submersion at $(I,I)$, we need to show that the differential map 
	\[D_{(I,I)}\Gamma : \cT_{(I,I)}\, \matud \times \matud  \longrightarrow \cT_{A^{-1/2}BA^{-1/2}}\, \defpos^\tau\]
	is surjective.
	
	Suppose that $D_{(I,I)}\Gamma$ is not surjective. Using the Riemannian structure of the tangent space at $\defpos^\tau$, there exists $Y_0\in \cT_{A^{-1/2}BA^{-1/2}}\, \defpos^\tau$ such that 
	\[\langle D_{(I,I)}\Gamma (X_1,X_2)\coma Y_0\rangle =\tr\left(D_{(I,I)}\Gamma (X_1,X_2) \, Y_0^*\right)=0,\]
	for every $(X_1,X_2)\in \cT_{(I,I)}\, \matud\times \matud$. In particular, 
	\[\langle D_{(I,I)}\Gamma (X,0)\coma Y_0\rangle=\langle D_{(I,I)}\Gamma (0,X)\coma Y_0\rangle =0,\]
	for every $X\in \mat$, $X^*=-X$.
	Since 
	\[\cT_{(I,I)}\, \matud\times\matud =\{(X_1,X_2)\in \mat\times \mat\, :\, X_i^*=-X_i, i=1,2\}\]
	and by standard computation one has that
	\[\cT_{A^{-1/2}BA^{-1/2}}\, \defpos^\tau=
	\left\{ Y\in \matsad : \, \tr \left((A^{1/2}B^{-1}A^{1/2})\,Y\right)=0\right\},\]
	$Y_0\in \matsad$ is such that $\tr \left(C\,Y_0\, C\right)=0$, with $C=(A^{-1/2}BA^{-1/2})^{-1/2}$. Therefore, since 
	\begin{equation}\label{derivada 1}
	D_{(I,I)}\Gamma (0,X)=A^{-1/2}[B,X]A^{-1/2},
	\end{equation}
	and
	\begin{equation}\label{derivada 2}
	D_{(I,I)}\Gamma (X,0)=[A^{-1/2},X]BA^{-1/2}+A^{-1/2}B[A^{-1/2},X],
	\end{equation}
	we have that
	\[\tr(A^{-1/2}[B,X]A^{-1/2}Y_0)=\tr([A^{-1/2}Y_0A^{-1/2},B]X)=0\]
	for all $X^*=-X$. In particular, for $X=[A^{-1/2}Y_0A^{-1/2},B]$, we deduce
	\begin{equation} \label{conmuta 1}
	[A^{-1/2}Y_0A^{-1/2},B]=0.
	\end{equation}
	On the other side, 
	\begin{align*}
	\tr([A^{-1/2},X]BA^{-1/2}Y_0)+\tr(A^{-1/2}&B[A^{-1/2},X]Y_0)=\\
	=\tr(([BA^{-1/2}&Y_0,A^{-1/2}]+[Y_0A^{-1/2}B,A^{-1/2}])X)=0
	\end{align*}
	for every $X^*=-X$. Thus, we have
	\begin{equation}\label{conmuta 2}
	[BA^{-1/2}Y_0,A^{-1/2}]+[Y_0A^{-1/2}B,A^{-1/2}]=0.
	\end{equation}
	From \eqref{conmuta 1} and \eqref{conmuta 2} we deduce
	$[A^{-1/2}BA^{-1/2},Y_0]=0,$
	that is,
	\[BA^{-1/2}Y_0A^{-1/2}A=AA^{-1/2}Y_0A^{-1/2}B.\]
	Finally, since $[A^{-1/2}Y_0A^{-1/2},B^{1/2}]=0$, we deduce that
	\[B^{1/2}A^{-1/2}Y_0A^{-1/2}B^{1/2}\in \{A,B\}'.\]
	Notice that, since 
	\[0=\tr(A^{-1/2}BA^{-1/2}Y_0)=\tr(B^{1/2}A^{-1/2}Y_0A^{-1/2}B^{1/2}),\]
	we have that $B^{1/2}A^{-1/2}Y_0A^{-1/2}B^{1/2}$ is not a scalar. Hence, we arrive to a contradiction and $\Gamma$ is a submersion at $(I,I)$.
	
	Conversely, suppose that there exists a non-trivial orthogonal projection $P$ in $\{A,B\}'$. Let $Z\in \matsad\cap \{A,B\}'$ be defined as
	$Z:=P+\frac{c}{c- \tr((A^{1/2}B^{-1}A^{1/2})^2)} (I-P)$, where $c=\tr(P(A^{1/2}B^{-1}A^{1/2})^2)$ (notice that, by functional calculus, $P$ commutes with $A^{1/2}B^{-1}A^{1/2}$).
	
	Then, it holds that $Y_0:=A^{1/2}B^{-1/2}ZB^{-1/2}A^{1/2}=A^{1/2}B^{-1}A^{1/2}Z$ is such that $Y_0 \in \matsad$. It is clear that $Y_0\neq 0$. Moreover,
	\begin{align*}
	\tr(A^{1/2}B^{-1}A^{1/2} Y_0)&=\tr\left((A^{1/2}B^{-1}A^{1/2})^2Z\right)\\
	&=\tr\left(P(A^{1/2}B^{-1}A^{1/2})^2\right)+c \tr\left((I-P)(A^{1/2}B^{-1}A^{1/2})^2\right)\\                    
	&=c+c\left(\frac{\tr((A^{1/2}B^{-1}A^{1/2})^2)}{c-\tr((A^{1/2}B^{-1}A^{1/2})^2)}- \frac{c}{c-\tr((A^{1/2}B^{-1}A^{1/2})^2)}\right)=0.
	\end{align*}
	That is, $Y_0\in \cT_{A^{-1/2}BA^{-1/2}}\defpos^\tau$. It is easy to see that
	\[ [A^{-1/2}Y_0A^{-1/2}, B]=[BA^{-1/2}Y_0, A^{-1/2}]=[Y_0A^{-1/2}B, A^{-1/2}]=0,\]
	which in turn implies that $Y_0$ is orthogonal to the range of  $D_{(I,I)}\Gamma$ , from \eqref{derivada 1} and \eqref{derivada 2}. So $\Gamma$ is not a submersion at $(I,I)$. 																		
	
\end{proof}

\begin{lem} \label{cuando no conmutan}
	Let $A,B$ in $\defpos$ such that $[A,B]\neq 0$. Then, for some $t_0>0$ sufficiently small, there exists a continuous curve 
	$\gamma(t):(-t_0,t_0) \longrightarrow \matud$ such that $\gamma(0)=I$ and, for $t\neq 0$,
	\[\log \la\left(A^{-1}B\right)\neq \log \la\left(A^{-1}\gamma(t)B\, \gamma(t)^*\right)\prec \log \la\left(A^{-1}B\right).\]
\end{lem}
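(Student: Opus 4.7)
The plan is to reduce to a block where the relative commutant is trivial and then exhibit a majorization-decreasing perturbation of $C_{0}:=A^{-1/2}BA^{-1/2}$ which can be lifted through a local section of $\Gamma$. Specifically, the $*$-algebra generated by $A$ and $B$ orthogonally decomposes $\C^d=\bigoplus_{i=1}^r \cK_i$ into common invariant subspaces on which the restrictions $A_i,B_i$ satisfy $\{A_i,B_i\}'=\C\cdot I_{\cK_i}$ (simultaneous irreducible decomposition of the pair $A,B$). Since $[A,B]\neq 0$, some index $j$ has $[A_j,B_j]\neq 0$, and the curve $\gamma(t)$ will act nontrivially only on $\cK_j$. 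Write $C_{0,j}:=A_j^{-1/2}B_j A_j^{-1/2}$, $x:=\log\la(C_{0,j})=\log\la(A_j^{-1}B_j)$ and $\tau_j:=\det(B_j)/\det(A_j)$.

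Let $\Gamma_j$ denote the analogue of \eqref{el gama multi} for $(A_j,B_j)$ with target $\cP(\cK_j)^{\tau_j}$. By Lemma \ref{gama es una submersion} applied on the block, $\Gamma_j$ is a submersion at $(I_{\cK_j},I_{\cK_j})$; Theorem \ref{es abierta} then provides an open neighborhood $\cU_j$ of $C_{0,j}$ in the image, and the standard local-section theorem for submersions yields a continuous $\sigma:\cU_j\to\mathcal U(\cK_j)\times\mathcal U(\cK_j)$ with $\sigma(C_{0,j})=(I_{\cK_j},I_{\cK_j})$. Since $[A_j,B_j]\neq 0$, Theorem \ref{Teo MRS} forces $x\neq(\log\la(B_j)-\log\la(A_j))^\downarrow$; combined with \eqref{Lidskii multiplicativo} this means $x$ is not constant, so $x_1>x_{d_j}$, where $d_j=\dim\cK_j$. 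Define
\[
y(t):=x+t^2(e_{d_j}-e_1)\in\R^{d_j},\qquad t\in(-t_0,t_0),
\]
which preserves the sum and satisfies $y(t)\prec x$ strictly for small $t\neq 0$. Letting $U_0\in\mathcal U(\cK_j)$ diagonalize $C_{0,j}$, set $C(t):=U_0\,D_{\exp(y(t))}\,U_0^*$, a continuous curve in $\cP(\cK_j)^{\tau_j}$ with $C(0)=C_{0,j}$, $\log\la(C(t))=y(t)$, and $C(t)\in\cU_j$ for $|t|$ small.

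Put $(U_j(t),V_j(t)):=\sigma(C(t))$, $W_j(t):=U_j(t)^*V_j(t)$, and
\[
\gamma(t):=W_j(t)\oplus I_{\cK_j^\perp}\in\matud,
\]
which is continuous with $\gamma(0)=I$. From $U^*\Gamma_j(U,V)U=A_j^{-1/2}(U^*V)B_j(V^*U)A_j^{-1/2}$ we obtain $\log\la(A_j^{-1}W_j(t)B_jW_j(t)^*)=\log\la(C(t))=y(t)$, while on the remaining blocks $\gamma(t)$ acts trivially and the spectra of $A_i^{-1}B_i$ ($i\neq j$) are unchanged. Consequently $\log\la(A^{-1}\gamma(t)B\gamma(t)^*)$ and $\log\la(A^{-1}B)$ are the decreasing rearrangements of the concatenations of $y(t)$ and $x$, respectively, with the same remaining log-spectra; strict majorization and the inequality $y(t)^\downarrow\neq x^\downarrow$ both pass to the concatenations, yielding the required inequality for every $t\neq 0$. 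The main technical obstacle is carrying out the simultaneous block reduction to a block with trivial relative commutant (using the semisimple structure of the finite dimensional $*$-algebra generated by $A$ and $B$), together with the verification that strict majorization is inherited under concatenation with a common vector; the remainder is a routine combination of openness of submersions with a one-parameter perturbation in a majorization-decreasing direction.
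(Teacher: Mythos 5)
Your proof is correct and follows essentially the same strategy as the paper's: restrict to a block where $\{A,B\}'$ is trivial (so that Lemma~\ref{gama es una submersion} gives a submersion), construct a majorization-decreasing perturbation of the log-spectrum of $A^{-1/2}BA^{-1/2}$ on that block, and lift it through $\Gamma$ to obtain the unitary curve $\gamma(t)$. The only differences are cosmetic: you use the Robin Hood transfer $y(t)=x+t^2(e_{d_j}-e_1)$ where the paper uses the convex combination $\rho(t)=|t|a+(1-|t|)b$ with $a=(\log\la(B)-\log\la(A))^\downarrow\prec b=\log\la(A^{-1/2}BA^{-1/2})$, and you spell out the concatenation step (that strict majorization persists after appending a common vector of untouched log-eigenvalues), which the paper leaves implicit in its "without loss of generality" reduction to $\mathrm{Im}(P)$.
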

\begin{proof}
	Let $P$ be an orthogonal projection in $\{A,B\}'$ such that $P$ is minimal, i.e. there is not an orthogonal projection $Q\in \{A,B\}'$ such that $QP=Q$.  Since $AB\neq BA$, $P$ has rank $\geq 2$. Then, by restricting our study to the reducing space $\Pim (P)$, we can assume that $\{A,B\}=\C\cdot I$.
	
	By Lemma \ref{gama es una submersion}, $\Gamma :\matud\times \matud \longrightarrow \defpos ^\tau$ defined as in \eqref{el gama multi}
	is a submersion at $(I,I)$ onto $\defpos ^\tau$. In particular, every continuous curve $l(t):(-1,1)\longrightarrow \defpos ^\tau$ such that $l(0)=A^{-1/2}BA^{-1/2}$ can be lifted (locally) to a continuous curve $\widetilde{l}(t):(-t_0,t_0)\longrightarrow \matud\times \matud$ such that $l(t)=\Gamma\circ \widetilde{l} (t)$, for $-t_0<t<t_0$, such that $\widetilde{l}(0)=(I,I)$.
	
	As a consequence of Theorem \ref{Teo MRS}, $b:=\log \la(A^{-1/2}BA^{-1/2})\neq \left(\log \la(B)-\log \la(A)\right)^\downarrow:=a$.
	Moreover, by Gelfand-Naimark-Lidskii Theorem, $a\prec b$. In particular,
	$\rho(t):=|t|a+(1-|t|)b\prec b$, for $t\in (-1,1)$, with $\rho(t)\neq b$ for $t\neq 0$.
	
	Let $U\in \matud$ be such that $UD_{\la(A^{-1/2}BA^{-1/2})}U^*=A^{-1/2}BA^{-1/2}$, where $D_{\la(A^{-1/2}BA^{-1/2})}$ stands for the diagonal matrix with the eigenvalues of $A^{-1/2}BA^{-1/2}$ arranged in non-increasing order.
	
	Define $l(t):(-1,1)\longrightarrow \defpos ^\tau$ by
	\[l(t)=UD_{\exp (\rho(t))}U^*,\]
	where $\exp(\rho(t)):=(e^{\rho(t)_1},e^{\rho(t)_2},\ldots,e^{\rho(t)_d})\in \R^d$.
	
	Then, $l(t)$ is a continuous curve such that $l(0)=A^{-1/2}BA^{-1/2}$. Moreover, it is clear that $\det (l(t))=\Pi_{i=1}^d e^{\rho(t)_i}=e^{\tr \rho(t)}=\frac{\det(B)}{\det(A)}=\tau$, so 
	$l(t)\in \defpos ^\tau$, $\forall t\in (-1,1)$. Also, by construction, $\log \la(l(t))\prec \log \la (l(0))$, for $t\in (-1,1)$.
	
	Then, for a small $0<t_0$, there is a continuous curve $\widetilde{l}(t)=(U(t),V(t))\in \matud\times \matud$, $t\in (-t_0,t_0)$ such that $\widetilde{l}(0)=(I,I)$ and
	$\Gamma\circ \widetilde{l}(t)=l(t)$. Consider $\gamma(t):(-t_0,t_0)\longrightarrow \matud$ given by $\gamma(t)=U^*(t)V(t)$. Hence,  $\gamma(t)$ is  continuous, $\gamma(0)=I$ and
	\begin{align*}
	\log \la\left (A^{-1}\gamma(t)B\, \gamma^*(t)\right)&=\log \la\left (A^{-1}V^*(t)U(t)BU^*(t)V(t)\right)\\
	&=\log \la\left (V(t)A^{-1/2}U^*(t)V(t)BV^*(t)U(t)A^{-1/2}V^*(t)\right)\\
	&=\log \la (l(t))\prec \log \la (A^{-1}B),
	\end{align*}
	which complete the proof.

\end{proof}

\begin{teo}\label{la curva}
	Let $A,B$ in $\defpos$ such that $$\log \la(A^{-1}B)\neq (\log \la(B)-\log \la(A))^{\downarrow}.$$
	Then, for some $t_0>0$ sufficiently small, there exists a continuous curve 
	$\gamma(t):(-t_0,t_0) \longrightarrow \matud$ such that $\gamma(0)=I$ and
	\[\log \la\left(A^{-1}B\right)\neq \log \la\left(A^{-1}\gamma(t)B\,\gamma(t)^*\right)\prec \log \la\left(A^{-1}B\right) \text{ for } t\neq 0.\]
\end{teo}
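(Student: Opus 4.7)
My plan splits the argument into two cases: $[A,B]\neq 0$ and $[A,B]=0$. In the former case, Lemma \ref{cuando no conmutan} already produces a curve with the required properties, so nothing remains to prove. All the real work lies in the commuting case, where the submersion technique underlying Lemma \ref{cuando no conmutan} does not apply.

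Assume $[A,B]=0$. Then $A$ and $B$ admit a common orthonormal eigenbasis, and after an initial unitary conjugation (the statement is invariant under this, because $\gamma\mapsto W^*\gamma W$ sends curves based at $I$ to curves based at $I$ and preserves the spectrum of $A^{-1}\gamma B\gamma^*$) I may assume $A=D_a$ with $a=\la(A)^\downarrow$ and $B=D_b$ for some permutation $b$ of $\la(B)$. Conjugating $B$ further by a permutation matrix block-diagonal with respect to the eigenspaces of $A$ (such a matrix commutes with $A$, hence leaves $\la(A^{-1}B)$ invariant), I may additionally assume that $b$ is decreasing within each $A$-eigenspace. By Theorem \ref{Teo MRS}, the hypothesis forces $b\neq\la(B)^\downarrow$; combined with the within-block monotonicity just arranged, this produces indices $i<j$ with $a_i>a_j$ \emph{strictly} and $b_i<b_j$.

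For such $i,j$, let $\gamma(t)\in\matud$ act as the planar rotation by angle $t$ on $\mathrm{span}\{e_i,e_j\}$ and as the identity on the orthogonal complement, so $\gamma(0)=I$. The matrix $A^{-1}\gamma(t)B\gamma(t)^*$ is then block-diagonal with $d-2$ unchanged eigenvalues $\{b_k/a_k:k\neq i,j\}$ and one $2\times 2$ block $M(t)$. A direct computation yields $\det M(t)=b_ib_j/(a_ia_j)$, independent of $t$, and
\[
\mathrm{tr}\,M(t)=\frac{b_i}{a_i}+\frac{b_j}{a_j}-\sin^2 t\,(b_j-b_i)\Bigl(\frac{1}{a_j}-\frac{1}{a_i}\Bigr),
\]
where the coefficient of $\sin^2 t$ is strictly positive by the choice of $i,j$. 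Hence for all small $t\neq 0$ the trace decreases strictly while the determinant is fixed, which forces the larger eigenvalue $\la_1(t)$ of $M(t)$ to satisfy $\la_1(t)<b_j/a_j$ strictly; by conservation of the product, $\la_2(t)>b_i/a_i$.

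Consequently $(\log\la_1(t),\log\la_2(t))$ is obtained from $(\log(b_j/a_j),\log(b_i/a_i))$ by a non-trivial Hardy--Littlewood--P\'olya T-transform: the sum is preserved while the spread strictly decreases. Since the remaining log-eigenvalues of $A^{-1}\gamma(t)B\gamma(t)^*$ coincide with those of $A^{-1}B$, this two-coordinate T-transform lifts to the strict majorization $\log\la(A^{-1}\gamma(t)B\gamma(t)^*)\prec\log\la(A^{-1}B)$ for every small $t\neq 0$, which completes the proof. The main obstacle is essentially bookkeeping: verifying that after the permutations within $A$-eigenspaces a strict inversion (with $a_i>a_j$ \emph{strict} and $b_i<b_j$) must remain, which is ensured by Theorem \ref{Teo MRS} together with the failure of equality; the $2\times 2$ perturbation and the T-transform step are both routine once this reduction is in place.
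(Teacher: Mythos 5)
Your proof is correct and follows essentially the same approach as the paper: dispose of the noncommuting case via Lemma~\ref{cuando no conmutan}, and in the commuting case build a planar rotation in a common eigenbasis implementing a strict T-transform on a $2\times 2$ block where the trace strictly drops while the determinant is fixed. Your bookkeeping is in fact slightly more careful than the paper's, since you explicitly enforce the within-$A$-eigenspace reordering of $B$'s eigenvalues, which is what guarantees that the index pair you extract has $\la(A)_i>\la(A)_j$ \emph{strictly} (a point the paper leaves implicit).
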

\begin{proof}
	By Lemma \ref{cuando no conmutan} it only remains to prove the case $[A,B]=0$.
	Thus, suppose that $AB=BA$. Then, there exists $U\in \matud$ such that
	\[UA^{-1}BU^*=D_{\la(A^{-1})}D_{\la^\sigma(B)},\]
	where $D_{\la(A^{-1})}$ and $D_{\la^\sigma(B)}$ are  diagonal matrices with $\la(A^{-1})=(\la(A)_1^{-1},\la(A)_2^{-1},\ldots, \la(A)_d^{-1})$ arranged in non-decreasing order and $\la^\sigma (B)$ are the eigenvalues of $B$ in some order. Suppose that for some $1\leq i\leq d$, 
	$\la^\sigma_i(B)<\la^\sigma_{i+1}(B)$. Let $\{e_j\}_{j=1}^d$ be an orthonormal basis that $A$ and $B$ has in common associated to $U$.
	
	For simplicity, denote by $\alpha_1=\la(A)_i$, $\alpha_2=\la(A)_{i+1}$ and $\beta_1=\la^\sigma_i(B)$, $\beta_2=\la^\sigma_{i+1}(B)$.
	Then, if we restrict to the eigenspace $\cS$ spanned by the common eigenvalues $\{e_i,e_{i+1}\}$,
	
	\[\restr{UA^{-1}BU^*}{\cS}= \left[ {\begin{array}{cc}
		\frac{\beta_1}{\alpha_1}& 0\\ 
		0 & \frac{\beta_2}{\alpha_2}\\   
		\end{array} } \right] 
	\]
	Let $W(t)\in \matud$, $t\in (-\frac{\pi}{2},\frac{\pi}{2})$ defined by
	\[W(t)=\cos(t)(e_i\otimes e_i+e_{i+1}\otimes e_{i+1})+\sin(t)(e_i\otimes e_{i+1}-e_{i+1}\otimes e_i)+P_{\cS^\perp} .\]
	Then, we consider
	$C_\cS(t):=\restr{UA^{-1}U^*W(t) UBU^*W^*(t)}{\cS}$. Note that $C_\cS(t)$ is such that
	\[C_\cS(t)=\left[ {\begin{array}{cc}
		\alpha_1^{-1} & 0 \\
		0 & \alpha_2^{-1} \\
		\end{array} } \right]\left[ {\begin{array}{cc}
		\cos(t) & \sin(t) \\
		-\sin(t) & \cos(t) \\
		\end{array} } \right] \left[ {\begin{array}{cc}
		\beta_1& 0\\ 
		0 & \beta_2\\  \end{array} } \right] \left[ {\begin{array}{cc}
		\cos(t) & -\sin(t) \\
		\sin(t) & \cos(t) \\
		\end{array} } \right].  
	\]
	It is easy to see that, for $t\neq 0$, 
	$$\tr(C_\cS(t))=\frac{\beta_1}{\alpha_1}+\frac{\beta_2}{\alpha_2}+\sin^2(t)(\beta_1-\beta_2)(\alpha_2^{-1}-\alpha_1^{-1}) <\frac{\beta_1}{\alpha_1}+\frac{\beta_2}{\alpha_2}=\tr(C_\cS(0)).$$ 
	Since $\det(C_\cS(t))=\frac{\beta_1 \beta_2}{\alpha_1 \alpha_2}$ for every $t$, this implies that the eigenvalues $\la_1(C_\cS)\geq \la_2(C_\cS)$ satisfy 
	$$\big(\log(\la_1(C_\cS(t))),\log(\la_2(C_\cS(t)))\big)\prec \left(\log \left(\frac{\beta_1}{\alpha_1}\right),\log \left(\frac{\beta_2}{\alpha_2}\right)\right).$$
	Finally, if $\gamma(t)=U^*W(t)U$, $-\frac{\pi}{2}\leq t\leq \frac{\pi}{2}$, is clear that
	\[\log\la(A^{-1}\gamma(t)B\, \gamma^*(t))=\log(\la(UA^{-1}U^*W(t)UBU^*W^*(t))\prec \log \la (A^{-1}B), \forall t,\]
	since $A^{-1}BP_{\cS^\perp}=A^{-1}\gamma(t)B\, \gamma^*(t)P_{\cS^\perp}$.
\end{proof}
As previously mentioned, a similar analysis of the right inequality in (\ref{Lidskii multiplicativo}) leads to similar conclusions.

\section{An application for the characterization of local extrema in a Procustes problem.}

In what follow, for $B\in\defpos$, consider the unitary orbit
$$\cO_B=\llav{C\in\defpos:\, C=U^*BU \peso{for} U\in\matud}\,.$$
Note that $\cO_B$ is a metric space endowed with the usual metric induced by the operator norm.

Given $A,B\in\defpos$ and $\nui{\cdot}$ an arbitrary unitarily invariant norm in $\mat$,  define the distance 
$F_{\paren{N,\, A,\, B}}=F_N:\cO_B\to \R_{>0}$ given by
\beq\label{deltan}
F_N(C)= N\paren{\log\paren{A^{-1/2}CA^{-1/2}}}\peso{for} C\in\cO_B\,.
\eeq

With this metric, the nearness problem posed in  $\defpos$ is to compute
\beq\label{procuston}
\min_{C\in \cO_B} F_N\paren{C}=\min_{C\in \cO_B}N\paren{\log\paren{A^{-1/2}CA^{-1/2}}}\,.
\eeq
and to characterize the approximants.

In the particular case that $N(\cdot)$ is the Frobenius norm; i.e.  $N(X)^2=\norm{X}_2^2=\tr(X^*X)$ (for $X\in\mat$) 
we have the Riemannian distance

\beq\label{deltados}
F_2(C):=\norm{\log\paren{A^{-1/2} C A^{-1/2}}}_2
=\corch{\sum_{j=1}^n \log^2\paren{\la_j(A^{-1}C)}}^{1/2}
\peso{for} C\in\cO_B\,,
\eeq
so in this case, the Procustes problem  in $\defpos$ is to solve
\beq\label{procusto}
\min_{C\in \cO_B} F_2\paren{C}
=\min_{C\in \cO_B}\corch{\sum_{j=1}^n \log^2\paren{\la_j(A^{-1}C)}}^{1/2}\,.
\eeq

\begin{teo}\label{nuestra funcion}
	Fixed $A,B\in \defpos$ and $N(\cdot)$ a strictly convex u.i.n. in $\mat$ and  consider
	$F_{\paren{N,\, A,\, B}}=F_N:\cO_B\to \R_{>0}$ given by
	$$
	F_N(C)= N\paren{\log\paren{A^{-1/2}CA^{-1/2}}}\,.
	$$
	Then,  if $C_0\in \cO_B$ is a local minimizer of $F_N$, there exists $\cU\in\matud$ such that   
	$U^*AU=D_{\la(A)}$
	and $C_0=U^*BU=D_{\la(B)}$. In particular, $C_0$ is a global minimizer of $F_N$ in $\cO_B$.
	
\end{teo}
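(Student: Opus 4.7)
The plan is to reduce the problem, via contradiction, to the equality case of the multiplicative Lidskii inequality and then invoke Theorem \ref{Teo MRS}. Let $C_0 \in \cO_B$ be a local minimizer of $F_N$, and fix $V\in\matud$ with $C_0=V^*BV$. The target is to establish the equality
$$(\log\la(C_0)-\log\la(A))^\downarrow = \log\la(A^{-1}C_0). \qquad (\star)$$
Once $(\star)$ holds, Theorem \ref{Teo MRS}, applied to the pair $(A,C_0)$, yields a $W\in\matud$ with $WAW^*=D_{\la(A)}$ and $WC_0W^*=D_{\la(C_0)}=D_{\la(B)}$ (the last identity because $C_0\in\cO_B$), and taking $U=W^*$ gives the desired conclusion.

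To obtain $(\star)$, I would argue by contradiction. If $(\star)$ fails, Theorem \ref{la curva} (with $B$ replaced by $C_0$) produces a continuous curve $\gamma:(-t_0,t_0)\to\matud$ with $\gamma(0)=I$ and, for $t\neq 0$,
$$\log\la\paren{A^{-1}\gamma(t)C_0\gamma(t)^*}\neq \log\la(A^{-1}C_0)\peso{and}\log\la\paren{A^{-1}\gamma(t)C_0\gamma(t)^*}\prec\log\la(A^{-1}C_0).$$
Define $C_t:=\gamma(t)C_0\gamma(t)^*=(V\gamma(t)^*)^*B(V\gamma(t)^*)$, which lies in $\cO_B$ and converges to $C_0$ in the operator norm as $t\to 0$. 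The plan is then to show $F_N(C_t)<F_N(C_0)$ for small $t\neq 0$, contradicting local minimality.

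For the norm comparison, note that $\log(A^{-1/2}CA^{-1/2})$ is selfadjoint with eigenvalues $\log\la(A^{-1}C)$ (since $\la(A^{-1}C)=\la(A^{-1/2}CA^{-1/2})$ and $\log$ is monotone, these are already in non-increasing order). By unitary invariance, $F_N(C)=N\paren{D_{\log\la(A^{-1}C)}}$. Setting $x_t=\log\la(A^{-1}C_t)$, $x_0=\log\la(A^{-1}C_0)$, we have $x_t\prec x_0$ with $x_t\neq x_0$. Remark \ref{desimayo}(2) gives $|x_t|\prec_w|x_0|$, hence Theorem \ref{teo intro prelims mayo}(1) yields $F_N(C_t)\leq F_N(C_0)$. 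If equality held, strict convexity (Theorem \ref{teo intro prelims mayo}(2)) would force $|x_t|^\downarrow=|x_0|^\downarrow$, and then Remark \ref{desimayo}(3), together with $x_t\prec x_0$, would imply $x_t^\downarrow=x_0^\downarrow$; but both $x_t$ and $x_0$ are already non-increasing, contradicting $x_t\neq x_0$. Hence $F_N(C_t)<F_N(C_0)$, as required, and $(\star)$ is proved.

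For the last assertion, Gelfand--Naimark--Lidskii (i.e.\ the left inequality in \eqref{Lidskii multiplicativo}) gives $(\log\la(B)-\log\la(A))^\downarrow\prec\log\la(A^{-1}C)$ for every $C\in\cO_B$; repeating the above strict-convexity bookkeeping shows that this majorization implies $F_N(C)\geq N(D_{(\log\la(B)-\log\la(A))^\downarrow})=F_N(C_0)$, so $C_0$ is a global minimizer. The main point requiring care is the passage from the strict (signed) majorization $x_t\prec x_0,\,x_t\neq x_0$ to a strict inequality of unitarily invariant norms of the associated diagonal matrices; this is precisely where Remark \ref{desimayo}(3) is used to bridge the signed majorization and the unsigned submajorization controlled by Theorem \ref{teo intro prelims mayo}. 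All other steps are formal consequences of Theorems \ref{Teo MRS} and \ref{la curva}.
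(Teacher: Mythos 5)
Your proof is correct and follows essentially the same strategy as the paper's: reduce to the negation of the Gelfand--Naimark--Lidskii equality case via Theorem \ref{Teo MRS}, invoke Theorem \ref{la curva} to produce a curve in $\cO_B$ through $C_0$ along which the signed log-spectrum is strictly majorized, and conclude by strict convexity of $N$. Where you add value is in spelling out the passage from the signed strict majorization $x_t \prec x_0$, $x_t \neq x_0$ to the strict norm inequality $F_N(C_t) < F_N(C_0)$; the paper asserts this in one line, whereas you correctly chain Remark \ref{desimayo}(2), Theorem \ref{teo intro prelims mayo}, and Remark \ref{desimayo}(3) (noting that both $x_t$ and $x_0$ are already in non-increasing order) to rule out equality. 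Your justification of the final ``global minimizer'' claim via the left GNL inequality is also a reasonable filling-in of a step the paper leaves implicit.
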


\begin{proof}
	Suppose that there is not $U\in\matud$ such that
	$U^*AU=D_{\la(A)}$
	and $C_0=U^*BU=D_{\la(B)}$.
	Then, by Theorem \ref{Teo MRS}, 
	\[\left( \log (\la(C_0)) - \log(\la(A))\right)^\downarrow \neq \log (\la (C_0\,A^{-1}))\,.\] 
	Therefore,  Theorem \ref{la curva} implies that, for some $t_0>0$ sufficiently small, there exists a continuous curve 
	$\gamma(t):(-t_0,t_0) \longrightarrow \matud$ such that $\gamma(0)=I$ and
	\[\log \la\left(A^{-1}C_0\right)\neq 
	\log \la\left(A^{-1/2}\gamma(t)C_0\,\gamma(t)^*A^{-1/2}\right)\prec \log \la\left(A^{-1/2}C_0A^{-1/2}\right) \text{ for } t\neq 0\,.\]
	Then, the fact that $N(\cdot)$ is a strictly convex u.i.n. implies that
	\[N\paren{\log \left(A^{-1/2}\gamma(t)C_0\,\gamma(t)^*A^{-1/2}\right)}< N\paren{\log \left(A^{-1/2}C_0\, A^{-1/2}\right)}\,,\]
	which contradicts the assumption that $C_0$ is a local minimizer of $F_N$ in $\cO_B$.	
	
\end{proof}
\begin{rem}
	In a certain way, Theorem \ref{nuestra funcion} completes the main result of \cite{BhatCon} in the particular case of the family of log-distances. Namely, we  show that the points at the orbit of $B$ described in the statement of the Theorem in \cite{BhatCon} are the only possible global minimizers. In this work we complete that result with  a local study that proves that these global minimizers are also the only local minimizers for the distance function. It is important to mention that almost the same study  can be carried out for the local/global maxima and similar conclusions can be drawn. 
	
	Finally, notice that our conclusions for local extrema can be applied directly to the Procustes problem using the Bures-Wasserstein metric and the Kullback-Leibler divergence as it was studied in  \cite{BhatCon}. Indeed, as the authors in that work show, the analysis of the extrema for the functions involved  also depends on the  Gelfand-Naimark-Lidskii inequality.
\end{rem}

\section*{Aknowledgments}
This work was partially supported by  CONICET 
(PIP 1505/15) and  Universidad Nacional de La Plata (UNLP 11 X829).


\begin{thebibliography}{}
	
	{\small 
		
		\bibitem{Bhat} R. Bhatia,  Matrix Analysis,
		Graduate Texts in Mathematics, 169. Springer-Verlag, New York, 1997.
		
		\bibitem{BhatCon} R. Bhatia, M. Congedo, Procustes problems in Riemannian manifolds of positive definite matrices.
		Linear Algebra  Appl., 563 (2019) 440-445.
		
		\bibitem{CMR} G. Corach, P. Massey, M. Ruiz, Procrustes problems and Parseval quasi-dual
		frames, Acta Applicandae Mathematicae 131 (2014), 179–195.
		
		
		\bibitem{Eld} L. Eld\'en, H. Park, A Procrustes problem on the Stiefel manifold.Numer. Math. 82,
		no. 4, 599-619 (1999).
		
		\bibitem{GB} J. C. Gower,  G. B. Dijksterhuis, Procrustes problems, Oxford Statistical Science
		Series, 30. Oxford University Press, Oxford, (2004).
		
		\bibitem{High} N.J. Higham, Matrix nearness problems and applications. Applications of matrix theory 	(Bradford, 1988), 1-27, Inst. Math. Appl. Conf. Ser. New Ser., 22, Oxford Univ. Press, New York, 1989. 
				
		\bibitem{A.Horn} A. Horn, Eigenvalues of sums of Hermitian matrices, Pacific J. Math. 12 (1962), 225-241.
		
		\bibitem{HJ1} R.A. Horn, C.R. Johnson, Matrix analysis. Second edition. Cambridge University Press, Cambridge, 2013. 
		
		\bibitem{HJ2} R.A. Horn, C.R. Johnson, Topics in matrix analysis. Corrected reprint of the 1991 original. Cambridge University Press, Cambridge, 1994.
		
		\bibitem{Ka} T. Kato, Perturbation Theory for Linear Operators, Springer-Verlag, Berlin Heidelberg New York 1980.
		
		\bibitem{Ki} U. Kintzel,  Procrustes problems in finite dimensional indefinite scalar product
		spaces. Linear Algebra Appl. 402 (2005), 1-28 .
			
		\bibitem{Lidskii} V.B. Lidskii, On the characteristic numbers of the sum and product of symmetric matrices. (Russian) Doklady Akad. Nauk SSSR (N.S.) 75 (1950), 769-772.
				
		\bibitem {dnp1}
		P. Massey, N. B. Rios, D. Stojanoff, Frame completions with prescribed norms: local minimizers and applications. 
		Adv. Comput. Math. 44 (2018), no. 1, 51–86. 
		
		\bibitem {dnp2}
		P. Massey, N. B. Rios, D. Stojanoff, Local Liskii's theorems for unitarily invariant norms.  Linear Algebra Appl. 557 (2018), 34-61. 
		
		\bibitem {dnp3}
		P. Massey, N. B. Rios, D. Stojanoff, Generalized frame operator distance problems,  Journal of Mathematical Analysis and Applications, (2019) vol. 479 2, 1738-1763. 
		
		\bibitem{MRS}P. Massey, M.A. Ruiz, D. Stojanoff, Multiplicative Lidskii's inequalities and optimal perturbation of frames,  Linear Algebra Appl., 469 (2015)m 539 -568.
				
		\bibitem{mr2010}
		P. Massey, M. Ruiz; Minimization of convex functionals over frame operators. Adv. Comput. Math. 32 (2010), no. 2, 131-153.
					
		\bibitem {mrs2}
		P.G. Massey, M.A. Ruiz, D. Stojanoff, Optimal frame completions.
		Advances in Computational Mathematics 40 (2014), 1011-1042.
		
		\bibitem {mrs3}
		P. Massey, M. Ruiz, D. Stojanoff; Optimal frame completions with prescribed norms for majorization.  
		J. Fourier Anal. Appl. 20 (2014), no. 5, 1111-1140.
		
		\bibitem{Wat} G. A. Watson,  The solution of orthogonal Procrustes problems for a family of orthogonally invariant norms. Adv. Comput. Math. 2 , no. 4, 393-405 (1994).	
		
		
		
	}
\end{thebibliography}
\end{document}